\newtheorem{thm}{Theorem}[section]
\newtheorem{lem}[thm]{Lemma}
\theoremstyle{definition}
\newtheorem{defn}[thm]{Definition}
\theoremstyle{remark}
\newtheorem{prob}{Problem}
\newcommand{\To}{\longrightarrow}
\begin{document}

\title{On fragmentable compact lines}
\author[A.\ Aviles]{Antonio Avil\'es}
\address{Universidad de Murcia, Departamento de Matem\'{a}ticas, Campus de Espinardo 30100 Murcia, Spain.}
\email{avileslo@um.es}

\author[G.\ Mart\'{i}nez-Cervantes]{Gonzalo Mart\'{i}nez-Cervantes}
\address{Universidad de Murcia, Departamento de Matem\'{a}ticas, Campus de Espinardo 30100 Murcia, Spain.}
\email{gonzalomartinezcervantes@gmail.com, gonzalo.martinez2@um.es}

\author[G.\ Plebanek]{Grzegorz Plebanek}
\address{Uniwersytet Wroc{\l}awski, Instytut Matematyczny, Pl. Grunwaldzki 2/4, 50-384 Wroclaw, Poland.}
\email{grzes@math.uni.wroc.pl}
\author{Stevo Todorcevic}
\address{Department of Mathematics, University of Toronto, Toronto, Canada, M5S 3G3.
	Institut de Math\'{e}matiques de Jussieu, CNRS UMR 7586, Case 247, 4 place Jussieu, 75252 Paris Cedex, France }%
\email{stevo@math.jussieu.fr, stevo@math.toronto.edu}

\thanks{The first and second authors are supported by projects MTM2014-54182-P and MTM2017-86182-P (MINECO,AEI/FEDER, UE) and by project 19275/PI/14 (Fundaci\'on S\'eneca). The fourth author is partially supported by Grants from NSERC (455916) and CNRS (IMJ-PRG UMR7586).}

\keywords{fragmentability; Radon-Nikod\'{y}m compact; compact line}

\subjclass[2010]{46B26,06A5,54F05}

\begin{abstract}
We prove that if a compact line is fragmentable, then it is a Radon-Nikod\'{y}m compact space.
\end{abstract}

\maketitle

\section{Introduction}
Let $X$ be a topological space and let $d:X\times X\To [0,\infty)$ be a metric on $X$. We do not assume that the topology on $X$ coincides with the topology induced by $d$. In fact, we consider the following two weaker conditions:

\begin{itemize}
	\item We say that $d$ fragments $X$ if for every nonempty subset $Y\subset X$ and for every $\varepsilon>0$ there exists a nonempty relatively open $V\subset Y$ of $d$-diameter less than $\varepsilon$, that is
	$$\sup\{d(x,y) : x,y\in V\}<\varepsilon.$$
	
	\item We say that $d$ is lower semicontinuous if for every $r>0$, the set $$\{(x,y)\in X\times X : d(x,y)\leq r\}$$ is closed.
\end{itemize}

A compact space $K$ is {\em fragmentable} if there exists a metric that fragments it, while it is called a {\em Radon-Nikod\'{y}m (RN) compact}
if there exists a lower semicontinuous metric that fragments it. These classes of compacta  arise in the theory of nonseparable Banach spaces, where they play a prominent role in problems concerning differentiability, renorming and others,  cf.\ \cite{FabianWA,Namiokasurvey}. Answering a problem by Namioka \cite{Namioka}, 
Orihuela, Schachermayer, Valdivia \cite{OSV} found the first examples of fragmentable compact spaces that are not Radon-Nikod\'{y}m. These were all the Gul'ko non-Eberlein compact spaces, like those constructed by Talagrand in \cite{Ta1,Ta2}.

By a {\em compact line} we mean a linearly ordered set that is a compact space in the topology generated by the base of  open intervals.
Fragmentability of compact lines was considered  in \cite{RNorder}, where it was asked whether a fragmentable compact line must be Radon-Nikod\'{y}m compact.
A positive solution to this problem appeared in a preprint  by Smith \cite{arxivSmith}. However, that proof happened to contain a gap, a fact communicated to us by its author, and for that reason the preprint was never published. In this note, we will give a proof that indeed every fragmentable compact line is RN. Our argument makes use of a characterization of $\sigma$-scattered orders due to  Ishiu and Moore \cite{ishiuline}.

\section{Preliminaries}

We first recall some basic facts about stationary sets, see Definition 8.21 and onwards in Jech's book ~\cite{jech}. We write $[K]^\omega$ for the family of all countable subsets of a set $K$. A family $\Delta\subset [K]^\omega$ is said to be a \emph{club} if

\begin{enumerate}[(i)]
	\item $\Delta$ is \emph{closed},  that is, for every  increasing sequence $M_1\subset M_2\subset\ldots$  of elements of $\Delta$ we have $\bigcup_n M_n\in \Delta$;
	\item $\Delta$ is \emph{unbounded},  that is, for every $M\in [K]^\omega$ there exists $N\in\Delta$ such that $M \subseteq N$.
\end{enumerate}

The family $\{M\in [K]^\omega : M_0\subset M\}$, for a fixed countable $M_0\subset K$, is an obvious example of a club.
The countable intersection of clubs is a club.

A set $S\subset [K]^\omega$ is {\em stationary}  if it interesects every club. Thus, a set is said to be non-stationary if it is disjoint from some club. Countable union of non-stationary sets is non-stationary.  The \emph{pressing-down} or \emph{Fodor's} lemma in this setting
asserts that if  $S\subset [K]^\omega$ is stationary and $f:S\To K$ satisfies $f(M)\in M$ for all $M\in S$, then there is a stationary subset $S'\subset S$ where $f$ is constant,
see \cite[Theorem 8.24]{jech}.

Let us now fix a compact line $K$, and let $L$ be the set all elements of $K$ that are either left-isolated or right-isolated in $K$, that is
%$$
\begin{equation}\label{el}
 L = \left\{x\in K : \left( \exists y \in K\setminus\{x\}\right)   [\min(x,y),\max(x,y)] = \{x,y\} \right\}.
 \end{equation}
% $$
%\setcounter{thm}{1}

For the rest of this section we suppose  that $K$ is a zero-dimensional compact line.  Then for every $x<y$ in $K$ there exist $u,v\in L$ such that $[u,v]=\{u,v\}$ and $x\leq u < v \leq y$;
otherwise, there would be an element between any $u,v\in [x,y]$ and the interval $[x,y]$ would be connected.
Consequently, every element $x\in K\setminus L$  can be expressed as a supremum of a subset of $L$ that does not have a supremum in $L$, and therefore $K$ is the completion $\widehat{L}$ of $L$ in the sense of \cite{ishiuline}.

 A linear order $Z$ is said to be {\em scattered} if it does not contain any copy of the usual linear order on $\mathbb{Q}$.
 A {\em $\sigma$-scattered} order is one that can be written as a countable union of scattered suborders.

%We shall also assume that the function $M\mapsto (x_M^0,x_M^1)$ is in all elementary submodels from $\Xi$.

We shall use the following concept introduced in \cite{ishiuline} (here $L$ is defined by (\ref{el})).

\begin{defn}
Given $x\in K$ and a set $M\subset K$, we say that $M$ \emph{captures} $x$ if there exists $z\in M$ such that only finitely many elements of $M\cap L$ lie between $x$ and $z$.
\end{defn}

Obviously, this is equivalent to saying that there exists $z\in M$ such that no elements of $M\cap L$ lie strictly between $x$ and $z$. Thus, the phrase 
$M$ \emph{does not capture $x$} means that for every $z\in M$ there is $y\in M\cap L$  strictly between $x$ and $z$. It is convenient to rephrase this in a different way. Given $M\subset K$ and $x\in K$, consider
%$$
\begin{equation}\label{x0}
x_M^0 = \sup\{y\in L\cap M : y < x\} \in K,
\end{equation}
\begin{equation}\label{x1}
x_M^1 = \inf\{y\in L\cap M : y > x\} \in K.
\end{equation}
%$$

\begin{lem}
 $M$ does not capture $x$ if and only if $[x^0_M,x^1_M]\cap M = \emptyset$.
\end{lem}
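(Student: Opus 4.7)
The plan is to prove the two implications separately. Geometrically, $x^0_M$ and $x^1_M$ are the supremum and infimum (taken in $K$) of the elements of $L\cap M$ lying respectively to the left and to the right of $x$; they bound the ``gap'' of $L\cap M$ around $x$, and $M$ should capture $x$ exactly when some point of $M$ falls inside this gap.

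For the $(\Leftarrow)$ direction I would assume $[x^0_M,x^1_M]\cap M=\emptyset$ and pick an arbitrary $z\in M$, aiming to produce $y\in L\cap M$ strictly between $x$ and $z$. Since $z\notin[x^0_M,x^1_M]$, either $z<x^0_M$ or $z>x^1_M$. In the first case the definition of $x^0_M$ as a supremum forces the existence of $y\in L\cap M$ with $y<x$ and $y>z$, so $z<y<x$ as required; note that the defining set is automatically nonempty, since otherwise $x^0_M=\min K$ and $z<\min K$ is impossible. The case $z>x^1_M$ is symmetric, using the infimum characterization of $x^1_M$.

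For the $(\Rightarrow)$ direction I would argue the contrapositive: given some $z\in M\cap[x^0_M,x^1_M]$, I want to exhibit a witness that $M$ captures $x$. If $z=x$, then $z$ itself works, since there is nothing strictly between $x$ and $z$. Otherwise, assume without loss of generality $x^0_M\le z<x$ (the case $x<z\le x^1_M$ is symmetric). I claim no $y\in L\cap M$ lies strictly between $z$ and $x$: any such $y$ would satisfy $y<x$ and $y\in L\cap M$, hence $y\le x^0_M\le z$, contradicting $y>z$. Thus $z$ witnesses that $M$ captures $x$.

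No step looks like a serious obstacle; the argument is essentially direct from the definitions. The only subtlety is the degenerate sub-case $z=x$ in the contrapositive (where $x\in M$ and $M$ captures $x$ for free), and the convention that $\sup\emptyset=\min K$, $\inf\emptyset=\max K$ in a compact line --- but as observed above, the strict inequalities $z<x^0_M$ and $z>x^1_M$ in the $(\Leftarrow)$ direction rule out the empty cases automatically, so neither point requires extra work.
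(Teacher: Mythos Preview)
Your proof is correct. The paper itself does not supply a proof of this lemma: it simply records the rephrasing ``$M$ does not capture $x$ iff for every $z\in M$ there is $y\in M\cap L$ strictly between $x$ and $z$'' and then states the lemma without further comment, treating it as immediate from the definitions of $x^0_M$ and $x^1_M$. Your argument is exactly the routine unpacking one would carry out to justify this, and all the details (including the degenerate case $z=x$ and the empty-set convention for sup/inf in a compact line) are handled properly.
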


In the next section we use a result of Ishiu and More characterizing $\sigma$-scattered orders  $L$
in terms of stationary sets in $[K]^\omega$ consisting of countable subsets of $K$ that capture all the elements of $L$.

\section{The main result}

We first show that  a fragmentable compact line is RN compact provided it is zero-dimensional, and then
conclude the general case in Theorem \ref{general} below.

\begin{thm}\label{0dimensional}
	Let $K$ be a zero-dimensional compact line and let $L$ be defined by (\ref{el}). Then the  following are equivalent:
	\begin{enumerate}[(i)]
		\item $K$ is fragmentable;
		\item $\Gamma = \{M\in [K]^\omega : \exists\ x\in L  \text{ not captured by } M \}$ is non-stationary subset of $[K]^\omega$;
		\item $L$ is a $\sigma$-scattered order;
		\item $K$ is Radon-Nikod\'{y}m compact.
	\end{enumerate}
\end{thm}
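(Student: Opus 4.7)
The strategy is to close a cycle (i)$\Rightarrow$(ii)$\Leftrightarrow$(iii)$\Rightarrow$(iv)$\Rightarrow$(i). The implication (iv)$\Rightarrow$(i) is immediate, since any lower semicontinuous fragmenting metric is in particular a fragmenting metric. The equivalence (ii)$\Leftrightarrow$(iii) is precisely the Ishiu--Moore characterisation of $\sigma$-scattered orders via capturing, which the paper cites from \cite{ishiuline} and which I invoke as a black box. Thus the substantive work splits into (iii)$\Rightarrow$(iv) and (i)$\Rightarrow$(ii).

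For (iii)$\Rightarrow$(iv), write $L=\bigcup_n L_n$ with each $L_n$ scattered, and let $\rho_n\colon L_n\to\mathrm{Ord}$ be the rank obtained by iterated removal of order-isolated points of $L_n$. I would define a pseudometric
\[ d_n(x,y)=2^{-\min\{\rho_n(z):z\in L_n,\ z\in(\min(x,y),\max(x,y)]\}} \]
on $K$ (with the convention $d_n(x,y)=0$ when the set being minimised is empty), and set $d=\sum_n 2^{-n}\min(d_n,1)$. Lower semicontinuity of $d$ follows because the minimum separating rank cannot strictly decrease in an order-limit of pairs; fragmentation follows by induction on the ranks, since for any nonempty $Y\subset K$ and any $n$ the points of $L_n$ of minimal separating rank inside $Y$ partition $Y$ into relatively open pieces of controlled $d_n$-diameter.

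The core implication is (i)$\Rightarrow$(ii). Given a fragmenting metric $d$, for each $n$ apply fragmentability by transfinite induction to produce a partition $K=\bigsqcup_{\alpha<\mu_n} V^n_\alpha$ where $V^n_\alpha$ is nonempty, relatively open in $K\setminus\bigcup_{\beta<\alpha}V^n_\beta$, and of $d$-diameter less than $1/n$. Using zero-dimensionality and the compact-line structure, one may refine so that each $V^n_\alpha$ is a relatively clopen interval whose endpoints lie in $L\cup\{\min K,\max K\}$. To every $x\in K$ and every $n$ this assigns the unique piece $V^n_{\alpha_n(x)}$ containing $x$ together with its endpoints $\ell_n(x),r_n(x)$. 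Let $\Delta$ be the club of countable $M\subset K$ that are closed under the endpoint operations $\ell_n,r_n$ for all $n$. The claim is $\Delta\cap\Gamma=\emptyset$: if $M\in\Delta$ fails to capture some $x\in L$, then by the lemma $I_M=[x^0_M,x^1_M]$ is disjoint from $M$ while $L\cap M$ accumulates to its endpoints from outside. Reflecting at each level $n$ through points of $M\cap L$ arbitrarily close to $x^0_M$ and $x^1_M$ should localise the entire interval $I_M$ inside a single piece $V^n_{\alpha_n}$, whence $I_M$ has $d$-diameter less than $1/n$ for every $n$; forcing $x^0_M=x^1_M\in M$ yields a contradiction.

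The main obstacle is the last step: ensuring that the endpoint operations really do force $I_M$ into a single piece at every level. Because $V^n_\alpha$ is only open in the residual set, one must work through the Cantor--Bendixson tree of pieces to track their ancestry, and argue that the accumulation of $M\cap L$ to the endpoints of $I_M$ propagates through this tree. If direct reflection via elementary closure is not quite enough, the fallback is to assume $\Gamma$ stationary and apply Fodor's lemma to an appropriately chosen regressive function $f\colon\Gamma\to K$ (for example a point of $M\cap L$ immediately below $x^0_M$ with a prescribed $n$-th piece), extracting a uniform scale at which the fragmentation property must fail on a stationary family of disjoint intervals $I_M$.
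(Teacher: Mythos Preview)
Your cycle and the black-box use of Ishiu--Moore match the paper. The two substantive implications, however, diverge from the paper in ways that matter.

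\medskip

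\textbf{(i)$\Rightarrow$(ii).} Your primary approach---build a club $\Delta$ of countable $M$ closed under ``endpoint operations'' coming from a transfinite fragmentation $K=\bigsqcup_\alpha V^n_\alpha$---does not go through, and the obstacle you flag is fatal rather than technical. The pieces $V^n_\alpha$ are open only in the residual $K\setminus\bigcup_{\beta<\alpha}V^n_\beta$, so even if you arrange each to be a clopen interval \emph{of that residual}, its ``endpoints'' are isolated points of the residual, not of $K$; they need not lie in $L$, and nothing forces the interval $I_M=[x^0_M,x^1_M]$ to sit inside a single piece. In particular, once some $V^n_\beta$ with $\beta<\alpha$ has been carved out of the middle of $I_M$, the remaining portions of $I_M$ can land in a single later $V^n_\alpha$ of small diameter without $I_M$ itself being small. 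Closure of $M$ under such endpoint functions therefore gives no control on $\mathrm{diam}_d(I_M)$. Your fallback (``apply Fodor to some regressive function'') is in the right spirit but is not yet a proof: the paper's argument requires a rather specific two-step pressing-down. One first passes to a stationary $\Gamma_n=\{M:d(x^0_M,x^1_M)>1/n\}$, then shows that, up to a non-stationary set, for every $M\in\Gamma_n$ and every approximant $v^i_M\nearrow x^0_M$ (resp.\ $w^i_M\searrow x^1_M$) there remain \emph{stationarily many} $N\in\Gamma_n$ with $v^i_M<x^0_N<x^0_M$ (resp.\ $w^i_M>x^1_N>x^1_M$). It is this density that lets one trap a pair $x^0_N,x^1_N$ with $d(x^0_N,x^1_N)>1/n$ inside any relatively open subset of $P=\{x^0_M,x^1_M\}$, contradicting fragmentability. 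A single application of Fodor to ``a point of $M\cap L$ just below $x^0_M$'' does not produce this.

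\medskip

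\textbf{(iii)$\Rightarrow$(iv).} Your metric is more elaborate than necessary and, as written, is \emph{not} lower semicontinuous: with the convention $z\in(\min(x,y),\max(x,y)]$, the set $\{(x,y):x<z\le y\}$ is not open in $K\times K$ unless $z$ happens to be left-isolated. The paper fixes this by working only with the right-isolated points $L^-\subset L$ (still $\sigma$-scattered), using the separation condition $x\le s<y$, and taking simply $d(x,y)=1/n$ where $n$ is the least index for which some $s\in S_n$ separates $x$ from $y$. The neighbourhood $(-\infty,s^+)\times(s,\infty)$ then witnesses lower semicontinuity, and no rank functions are needed. Your rank-based construction can be repaired along the same lines, but the extra structure buys nothing here.
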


\begin{proof}
$[(i)\Rightarrow (ii)]$ Suppose that $K$ is fragmented by a metric $d$ but  $\Gamma$ is stationary. For every $M\in\Gamma$ we choose an element $x_M = x\in L$ 
 that is not captured by $M$, and $x^0_M$ and $x^1_M$ the corresponding elements of $K$ defined by formulas (\ref{x0}) and (\ref{x1}). 
 Since $x^0_M$ and $x^1_M$ are not in $M$, to every $M\in \Gamma$ we can associate an increasing sequence $(v_M^i)\subset L\cap M$ whose supremum is $x^0_M$ and a decreasing sequence $(w_M^i)\subset L\cap M$ whose infimum is $x^1_M$. 
 
 Note that $x_M^0\neq x_M^1$ for every $M\in \Gamma$. Indeed, otherwise we would have $x^1_M=x^0_M = x_M\in L$ 
 so $x_M$ would be isolated neither from the left nor from the right,
 which is in contradiction with the formula (\ref{el})  defining $L$.
Consequently, $d(x^0_M,x^1_M)>0$ for every $M\in \Gamma$, and there must exist $n<\omega$ such that the set
  $$\Gamma_n = \{M\in \Gamma : d(x^0_M,x^1_M)>1/n\}$$ 
  is stationary.
\medskip

\noindent {\sc Claim.}  The set
$$A = \{M\in \Gamma_n : \exists\ i<\omega :
\{ N\in \Gamma_n : v^i_M <x^0_N < x^0_M\} \text{ is non-stationary}\}
\}$$

is non-stationary.
\medskip

{\em Proof of claim.} Suppose that, on the contrary, $A$ is stationary. 
Then, without loss of generality, we can assume that the fact that $M\in A$ is witnessed by the same  $i<\omega$.
 By the pressing down lemma we can find a stationary set $A_0\subset A$  such that 
$v^i_M = v$ for all $M\in A_0$. Let $\xi = \sup\{x^0_M : M\in A_0\}$. The set $A_1 = \{M\in A_0 : \xi\in M\}$ is stationary. If $M\in A_1$, then notice that $x^1_M<\xi$ (otherwise $\xi\in [x^0_M,x^1_M]\cap M = \emptyset$). Thus, for each $M\in A_1$ we can choose $w_M^i<\xi$. Using  the pressing down lemma again,
 we can find $w$  such that $A_2 = \{M\in A_1 : w^i_M = w\}$ is stationary. Since $\xi$ is the supremum, we can find $P\in A_0$ such that $w<x^0_{P}$. On the one hand, $\{N\in \Gamma_n : v<x^0_N<x^0_{P}\}$ is non-stationary since $P\in A$. On the other hand, that set contains $A_2$ which is stationary, a contradiction.
\medskip

Now, once the claim is proved, a similar argument gives that
$$B = \{M\in \Gamma_n : \exists\ i<\omega :
\{ N\in \Gamma_n : w^i_M > x^1_N > x^1_M\} \text{ is non-stationary}\}
\}$$
is non-stationary.

We now examine  the set $$P = \{x^0_M,x^1_M : M\in \Gamma_n\setminus(A\cup B)\}\subset K.$$

By fragmentability, this set should have a nonempty relative open subset of diameter less than $1/n$. 
Suppose $W\subset P$ is such an open set;   we can assume that $W$ is  a relative open interval, and that it contains a point of the form, say, $x^0_M$. 
Then, since $M\not\in A$ there are stationarily many $x^0_N$ in $W\cap P$ with $x^0_N < x^0_M$. There will be some of these $N$ such that $M\subset N$ and $x^0_M\in N$, because there are club many such $N$'s. Since $N$ does not capture $x_N$ and $x^0_M\in N$, we must have $x^1_N\leq x^0_M$. Thus $x^0_N,x^1_N\in W$, but $d(x^0_N,x^1_N)>1/n$ because $N\in \Gamma_n$, a contradiction.

\noindent $[(ii)\Rightarrow (iii)]$ This is the content of Theorem 2.11 of \cite{ishiuline}.

\noindent $[(iii) \Rightarrow (iv)]$ Let $L^-\subset L$ be the set of all right-isolated elements of $K$, that is
$$ L^- = \{s\in K : \exists s^+>s : [s,s^+] = \{s,s^+\} \}.$$
If $L$ is $\sigma$-scattered, then so is $L^-$, and we can  write $L^- = \bigcup_n S_n$ where each $S_n$ is a scattered ordered set. We can suppose that $S_1\subset S_2\subset \ldots$. Since $K$ is zero-dimensional, for every $x<y$ there is $s\in L$ with $x\leq s <y$. 

Define a function $d$ on $K\times K$ given for $x<y$ by the formula
$$d(x,y) =1/n, \mbox{ where }  n=\min\{k : \exists s\in S_k : x\leq s < y \} .$$
Then $d$ is easily seen to define  a metric, in fact $d(x,y) \leq \max\{d(x,z),d(z,y)\}$ for every $x,y,z\in K$. 
Let us check that $d$ is lower semicontinuous. For this it is enough to notice that if we  take $x<y$ such that $d(x,y)=1/n > r$, and $s\in S_n$ such that $x\leq s < y$,  
then the product of half-lines
$$(-\infty,s^+)\times (s,\infty ),$$ 
%$[\min(K)],s^+)\times (s,\max(K)]$ 
is a neighbourhood of $(x,y)\in K\times K$ in which   all the pairs are at distance greater than $r$. 

Now it remains to check that the metric $d$ fragments $K$. Suppose, on the contrary, 
that there is a nonempty subset $Y\subset K$ all of whose nonempty relative open subsets have $d$-diameter at least $1/n$.
 For every finite sequence of 0's and 1's $\sigma\in 2^{<\omega}$, we define, by induction on the length of $\sigma$, a nonempty relative open interval $Z_\sigma\subset Y$ and $s_\sigma\in S_n$ as follows.
 
 We start with $Z_\emptyset = Y$. Given $Z_\sigma$, we find two points $x<y$ in $Z_\sigma$ with $d(x,y)\geq 1/n$, we pick $s_\sigma\in S_n$ such that $x\leq s_\sigma < y$, and we define
$$Z_{\sigma^\frown 0} = Z_\sigma\cap (-\infty,s_\sigma^+), \quad  Z_{\sigma^\frown 1} = Z_\sigma\cap (s_\sigma,\infty).$$ 
%$Z_{\sigma^\frown 0} = Z_\sigma\cap [\min(K),s_\sigma]$ and $Z_{\sigma^\frown 1} = Z_\sigma\cap [s_\sigma^+,\max(K)]$. 
Then $Z_{\sigma^\frown i}$ are again nonempty open relative subintervals of $Y$.

 Once the construction is done, it gives the set $\{s_\sigma : \sigma\in 2^{<\omega}\}\subset S_n$, which is  a countable ordered set such that between every two points there is a third one. The classical theorem of Cantor implies that this set is order-isomorphic to $\mathbb{Q}$, which contradicts that $S_n$ is a scattered order.

Finally, the implication $(iv)\Rightarrow (i)$  is evident from the definitions, and the proof is complete.
\end{proof}

\begin{thm}\label{general}
	If $K$ is a fragmentable compact line, then $K$ is Radon-Nikod\'{y}m compact.
\end{thm}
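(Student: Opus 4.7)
The natural approach is to reduce the general case to the zero-dimensional case settled by Theorem~\ref{0dimensional}, via a double-arrow construction. Let $K^\ast := K \times \{0,1\}$ with the lexicographic order (identifying extrema as usual); then $K^\ast$ is a zero-dimensional compact line, with a basis given by the clopen intervals of the form $[(a,0),(b,1)]$, and the projection $\pi: K^\ast \To K$, $\pi(x,i)=x$, is a continuous order-preserving surjection with $|\pi^{-1}(x)|\leq 2$ for every $x \in K$.

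The first step is to show that $K^\ast$ is fragmentable. Given a fragmenting metric $d$ on $K$, the idea is to build a fragmenting metric $d^\ast$ on $K^\ast$ by combining the pull-back $d\circ\pi$ with a perturbation that distinguishes the split pairs $(x,0),(x,1)$ at arbitrarily small scales. A purely additive choice $d^\ast((x,i),(y,j)):=d(x,y)+|i-j|$ is a metric but only fragments $K^\ast$ at scales above the additive constant, so the perturbation must be more delicate; one approach is to exploit the $\sigma$-discrete structure that fragmentability provides on $K$ to produce small-scale separators of the split pairs. Once $K^\ast$ is known to be fragmentable, Theorem~\ref{0dimensional} yields that $K^\ast$ is Radon-Nikod\'ym.

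It then remains to descend from $K^\ast$ back to $K$. Given a lower semicontinuous fragmenting metric $d^\ast$ on $K^\ast$, the natural candidate on $K$ is
\[ d(x,y) := \inf\{d^\ast(u,v): u\in\pi^{-1}(x),\ v\in\pi^{-1}(y)\}, \]
which is automatically lower semicontinuous, since $\{(x,y)\in K\times K: d(x,y)\leq r\}$ equals $(\pi\times\pi)(\{d^\ast\leq r\})$, the image of a closed set under a continuous map between compact Hausdorff spaces.

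The principal obstacle I expect is that this $d$ may fail the triangle inequality by a ``split-gap'' term $d^\ast((y,0),(y,1))$ at an intermediate point $y$. Correcting this using the at-most-$2$-to-$1$ structure of $\pi$ together with the order on $K$, and then verifying that the resulting metric still fragments $K$, is where the bulk of the technical work is expected to lie.
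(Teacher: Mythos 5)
Your reduction fails at its very first step, and the failure is structural rather than technical: $K^\ast=K\times\{0,1\}$ with the lexicographic order is in general \emph{not} fragmentable, so no choice of the perturbation $d^\ast$ can exist. Take $K=[0,1]$, which is metrizable and hence trivially fragmentable (and RN); then $K^\ast$ is the split interval. It is a zero-dimensional compact line in which every point is an endpoint of a jump, so the set $L$ of Theorem~\ref{0dimensional} is all of $K^\ast$, and $K^\ast$ is not $\sigma$-scattered: if $K^\ast=\bigcup_n S_n$, some $S_n$ has uncountable projection to $[0,1]$, every uncountable subset of $\mathbb{R}$ contains a copy of $\mathbb{Q}$, and lifting one preimage of each rational gives a copy of $\mathbb{Q}$ inside $S_n$. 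By the equivalence of (i) and (iii) in Theorem~\ref{0dimensional} (or by the classical fact that the split interval is not fragmentable), $K^\ast$ is not fragmentable. The same computation shows the obstruction cannot be dodged by splitting fewer points: to make a connected interval of $K$ zero-dimensional from above one must split uncountably many of its points, and the resulting set $L$ then contains a copy of $\mathbb{Q}$ inside each of its uncountable pieces, so it is never $\sigma$-scattered. Thus the bulk of the difficulty is not, as you anticipate, in descending from $K^\ast$ to $K$; the ascent itself is impossible (your descent step also has the triangle-inequality defect you flag, but that point is moot).

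The paper instead resolves $K$ into zero-dimensional pieces from \emph{below}. Fixing a fragmenting metric $d$, for each $n$ one takes a maximal pairwise disjoint family $\mathcal{F}_n$ of closed infinite intervals of $d$-diameter less than $1/n$ and deletes their open interiors, obtaining a closed, hence compact and fragmentable, subspace $K_n\subset K$; maximality of $\mathcal{F}_n$ together with fragmentability forces $K_n$ to be zero-dimensional, so each $K_n$ is RN by Theorem~\ref{0dimensional}. One then recovers $K$ by embedding it into $\prod_n P(K_n)$: on a deleted interval $[a,b]\in\mathcal{F}_n$ the $n$-th coordinate interpolates affinely between $\delta_a$ and $\delta_b$ by means of a monotone continuous function, which makes each $\Phi_n:K\to P(K_n)$ continuous, and the family $(\Phi_n)$ separates points because two points at $d$-distance at least $1/n$ cannot lie in a common member of $\mathcal{F}_n$. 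Stability of the class of RN compacta under $P(\cdot)$, countable products and closed subspaces completes the argument. If you wish to keep your two-step architecture, this passage to the subspaces $K_n$ and the probability-measure trick is the correct replacement for the double-arrow cover.
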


\begin{proof}
 Let $d$ be a metric that fragments $K$. For every $n$ find a maximal family $\mathcal{F}_n$ of pairwise disjoint closed infinite intervals of $K$ of $d$-diameter less than $\frac{1}{n}$, and let 
 $$K_n = K\setminus \bigcup\{ (a,b) : [a,b]\in \mathcal{F}_n\}.$$ 
 The set $K_n$ is clearly compact; let us check that it is also zero-dimensional.

 Set $x,y\in K_n$ with $x<y$.  If $(x,y)$ contains a finite open interval, we are done. Otherwise, by fragmentability, it contains an infinite open subinterval of diameter less than $1/n$, which contains a further infinite closed subinterval of diameter less than $1/n$. The maximality of $\mathcal{F}_n$ provides an interval $[a,b]\in \mathcal{F}_n$ with $[a,b]\cap (x,y) \neq \emptyset$. Since $x,y\in K_n$, we get that $x,y \notin (a,b)$. This implies that $x\leq a<b \leq y$, which in turn proves that $K_n$ is zero-dimensional.

 It follows from  Theorem~\ref{0dimensional} that  the compact space $K_n$ is RN. For every $[a,b]\in \mathcal{F}_n$ fix a continuous nondecreasing function $f_{(a,b)}:K\To [0,1]$ such that $f_{(a,b)}(a) = 0$ and $f_{(a,b)}(b)=1$. Let $P(K_n)$ be the compact space of probability measures on $K_n$ endowed with the weak$^\ast$ topology, that is  the coarsest topology that makes  the functional $\mu\mapsto \int h\; {\rm  d} \mu$ continuous,
 for every continuous function $h:K_n\To\mathbb{R}$. Consider the function $\Phi_n:K\To P(K_n)$ given by 
 $$\Phi_n(x)=
 \begin{cases} 
 \delta_x &   \text{ if } x\in K_n, \\
 \Phi_n(x) = (1-f_{(a,b)}(x))\delta_a + f_{(a,b)}(x)\delta_b &  \text{ if }x\in [a,b]\in\mathcal{F}_n.\end{cases}$$
 Observe that $\Phi_n:K\To P(K_n)$ is weak$^\ast$ continuous: take any continuous function $h:K_n\To\mathbb{R}$.
  Then 
  $$\int h\ d\Phi_n(x) = \begin{cases} 
 h(x) &\text{ if } x\in K_n, \\ 
 (1-f_{(a,b)}(x))h(a) + f_{(a,b)}(x)h(b) & \text{ if } x\in [a,b]\in\mathcal{F}_n,\end{cases}$$
 which plainly shows that  the mapping $x\mapsto \int h\; {\rm  d}\Phi_n(x)$ is continuous.
 
  Consider now the diagonal mapping
 $$\Phi:K\To \prod_n P(K_n), \quad  \Phi(x) = (\Phi_1(x),\Phi_2(x),\ldots).$$ 
 Then $\Phi$ is continuous and it is, moreover,  injective: 
 observe  for any $x,y\in K$ with $d(x,y)\ge 1/n$ we have $\Phi_n(x)\neq\Phi_n(y)$, so the mappings $\Phi_n$ separate points of $K$.

We conclude that $K$ is homeomorphic to a closed subspace of $\prod_n P(K_n)$. The class of RN compacta is closed under countable products and under the operation of 
taking the space of probability measures, see  \cite{Namioka}. Hence,  $K$ is RN compact, and the proof is complete.
\end{proof}

\section{Final remarks}

One may wonder whether Theorem~\ref{general} can be generalized to a class of compact spaces larger than the class of linearly ordered compact spaces. Since the class of RN compact spaces is closed under taking subspaces and under countable products, it can be easily checked that Theorem~\ref{general} also applies to countable products of compact lines.

A natural class which generalizes both the class of linearly ordered compact spaces and the class of RN compact spaces is the class of weakly Radon-Nikod\'ym (WRN) compacta (see \cite{Gonzalo2} for the definition). Nevertheless, S. Argyros proved the existence of Gul'ko WRN compact spaces which are not Eberlein (see \cite{Argyros} and \cite[Section 2.4]{Gonzalo}). As we highlighted in the Introduction, every Gul'ko non-Eberlein compact space is an example of a fragmentable compact space which is not Radon-Nikod\'ym.
Therefore, we cannot change compact lines by WRN or Gul'ko compacta in Theorem~\ref{general}.

Notice that the class of fragmentable compact spaces is stable under continuous images \cite{Ribarska}. Another remarkable example of a fragmentable compact space which is not RN is the one constructed in	\cite{RNimage}, which indeed is a continuous image of a RN compact space. Nevertheless, one might expect that Theorem~\ref{general} can be extended to continuous images of compact lines:

\begin{prob}
Is every continuous image of a compact line Radon-Nikod\'ym provided it is fragmentable?
\end{prob}

Recall that the class of continuous images of compact lines coincides with the class of monotonically normal compact spaces by a result of M.E. Rudin \cite{Rudin}.

On the other hand, every zero-dimensional compact line can be obtained from a metric compact space with inverse limit systems consisting of simple extensions.
In the Boolean algebra setting, these extesions are called minimal extensions and the Boolean algebras obtained by this process are known as minimally generated Boolean algebras. We do not know the answer to the following problem:

\begin{prob}
Is the Stone space associated to a minimally generated Boolean algebra RN provided it is fragmentable?
\end{prob}

\end{document}